\theoremstyle{definition} \theoremstyle{plain}
\newtheorem{lemma}{Lemma}[section]
\newtheorem*{lemma*}{Lemma}
\newtheorem{theorem}{Theorem}[section]
\numberwithin{equation}{section}
\newcommand{\RHom}{\operatorname{RHom}}
\newcommand{\LDer}{\operatorname{\mathbb{L}}}
\newcommand{\Hom}{\operatorname{Hom}}
\title{Non-commutative Laurent phenomenon for two variables}
\begin{document}
\author{Alexandr Usnich}

\begin{abstract}
 We prove the non-commutative Laurent phenomenon for two variables
\end{abstract}

\maketitle

\section{Introduction}

	Let us consider an automorphism of the field $K=\mathbf{C}(x,y)$ given by the formula:
\[F:(x,y)\mapsto(\frac{H(x)}{y},x),\]
where $H(x)=1+h_1x+\dots +h_{n-1}x^{n-1}+x^n$ is a reversible polynomial, i.e. $h_i=h_{n-i}$.

The iterations of $F$ are actually given by Laurent polynomials \cite{FZ1}. It means that for any integer $k$ we have:
\[F^k:(x,y)\mapsto(L_1(x,y),L_2(x,y)),\]
	where $L_1,L_2\in \mathbf{C}[x,x^{-1},y,y^{-1}]$ are Laurent polynomials.
	
We introduce a non-commutative analog of this transfomation: consider
\[F_{nc}:(x,y)\mapsto (y^{-1}H(x),y^{-1}xy).\] 

We view $x,y$ as elements freely generating the non-commutative algebra $A$ by addition, multiplication and taking inverses of some elements. Namely, we have a ring morphism $\phi:A\rightarrow \mathbf{C}(x,y)$, and we can invert elements $a$ which don't belong to the kernel of $\phi$. Then $F_{nc}$ is an automorphism of the algebra $A$. If we allow to invert only elements $x,y$, then we will obtain the non-commutative subalgebra $\mathbf{C}<x,x^{-1},y,y^{-1}>\subset A$ which we call the ring of non-commutative Laurent polynomials.

We will prove the following result, conjectured by M.Kontsevich:
\begin{theorem}\label{main_thm}
		For any integer $k$ and for any reversible polynomial $H(x)$, the transformation $F_{nc}^k$ is given by non-commutative Laurent polynomials.
\end{theorem}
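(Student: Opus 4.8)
The plan is to translate the dynamics into a single scalar recursion, observe that it suffices to treat non‑negative iterates, and then prove non‑commutative Laurentness by induction. Write $x_k:=F_{nc}^{\,k}(x)$ and $y_k:=F_{nc}^{\,k}(y)$, viewed inside the skew field of fractions of $A$, which contains $\mathcal L:=\mathbf{C}\langle x,x^{-1},y,y^{-1}\rangle$. Since $F_{nc}$ is an automorphism, applying $F_{nc}^{\,k}$ to the defining formulas gives $x_{k+1}=y_k^{-1}H(x_k)$ and $y_{k+1}=y_k^{-1}x_ky_k$; from the second, $x_k=y_ky_{k+1}y_k^{-1}$, and substituting into the first, together with the fact that a polynomial commutes with its argument (so $H(y_ky_{k+1}y_k^{-1})=y_kH(y_{k+1})y_k^{-1}$), yields $x_{k+1}=H(y_{k+1})y_k^{-1}$. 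These relations combine into
\[
 y_k=x_kx_{k-1}x_k^{-1},\qquad x_{k+1}=x_kx_{k-1}^{-1}H(x_k)x_k^{-1}=x_kx_{k-1}^{-1}G(x_k),\quad G(t):=t^{-1}H(t),
\]
with initial data $x_0=x$, $x_{-1}=x^{-1}yx$ (only $x_j^{-1}H(x_j)=H(x_j)x_j^{-1}$ is used). Thus $F_{nc}^{\,k}$ is given by non‑commutative Laurent polynomials exactly when $x_k\in\mathcal L$ and $y_k=x_kx_{k-1}x_k^{-1}\in\mathcal L$, and this is what I would prove. A short computation shows $F_{nc}^{-1}=\tau F_{nc}\tau$, where $\tau$ exchanges $x$ and $y$; since $\tau(\mathcal L)=\mathcal L$ one gets $x_{-k}=\tau(y_k)$ and $y_{-k}=\tau(x_k)$, so it is enough to treat $k\ge 0$, and for $k=0,1$ the claim is immediate.

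The induction on $k$ is where the real content sits. The obstruction is that $x_k$ is not a unit of $\mathcal L$ once $k\ge 1$: from $x_k=y_{k-1}^{-1}H(x_{k-1})$ one has $x_k^{-1}=H(x_{k-1})^{-1}y_{k-1}$, and $H(x_{k-1})^{-1}\notin\mathcal L$. Hence $x_{k+1}=x_kx_{k-1}^{-1}G(x_k)$ \emph{a priori} carries the denominators $H(x_{k-1})^{-1}$ and $H(x_{k-2})^{-1}$, and the whole theorem is the assertion that they cancel. To expose this I would run the induction with a \emph{strengthened hypothesis}: besides $x_k,y_k\in\mathcal L$, carry the exact factorizations $x_k=y_{k-1}^{-1}H(x_{k-1})$ and $y_k=x_kx_{k-1}x_k^{-1}$ together with normal forms for $y_{k-1}$ and $H(x_{k-1})$, arranged so that in the expansion of $x_{k+1}$ every $H(x_j)^{-1}$ produced by a denominator stands next to an $H(x_j)$ and is annihilated by commutativity of polynomials in $x_j$. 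Doing so, $x_{k+1}$ collapses to $y_k^{-1}+x_kx_{k-1}^{-1}\hat H(x_k)$ with $\hat H(t):=(H(t)-1)/t\in\mathbf C[t]$; neither summand lies in $\mathcal L$ for $k\ge 2$, and at precisely this point \emph{reversibility} is indispensable: the reverse polynomial $\overleftarrow H(t):=\sum_i h_it^{\,n-i}$ equals $H(t)$, which is what forces the two pieces to recombine into an element of $\mathcal L$ — mirroring the commutative situation, where $\overleftarrow H=H$ is exactly what prevents the $H(x_j)$‑denominators from accumulating (for a non‑reversible $H$ one already loses Laurentness at $x_3$, where one would need $H(x)\mid \overleftarrow H(x)$). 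Once $x_{k+1}\in\mathcal L$ is obtained in this refined form, $y_{k+1}=x_{k+1}x_kx_{k+1}^{-1}=x_kx_{k-1}^{-1}x_kx_{k-1}x_k^{-1}$, and the conjugating $H(x_k)^{\pm1}$‑factors cancel by the same mechanism, closing the induction.

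The main obstacle I expect is the combinatorial one: choosing the normal form rigidly enough that the pairing of each $H(x_j)^{-1}$ with an $H(x_j)$, and the reversibility‑driven recombination of the last two obstructions $y_k^{-1}$ and $x_kx_{k-1}^{-1}\hat H(x_k)$, both go through uniformly in $k$. Making the bookkeeping of the nested conjugations interlock with the palindromic structure of $H$ is the heart of the argument; everything else is formal.
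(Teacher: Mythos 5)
Your formal reductions are all correct and easily verified: the recursion $x_{k+1}=y_k^{-1}H(x_k)$, $y_{k+1}=y_k^{-1}x_ky_k$, the identity $y_k=x_kx_{k-1}x_k^{-1}$ (which uses only that $H(x_{k-1})$ commutes with $x_{k-1}$), the resulting single recursion $x_{k+1}=x_kx_{k-1}^{-1}G(x_k)$, the reduction to $k\ge 0$ via $F_{nc}^{-1}=\tau F_{nc}\tau$, and the splitting $x_{k+1}=y_k^{-1}+x_kx_{k-1}^{-1}\hat H(x_k)$. But none of this is the theorem; it is a restatement of it. The entire content of the non-commutative Laurent phenomenon is precisely the assertion that the denominators $H(x_{k-1})^{-1}$, $H(x_{k-2})^{-1},\dots$ hidden in $x_k^{-1}$ and $x_{k-1}^{-1}$ cancel, and your proposal does not exhibit the mechanism: the ``strengthened hypothesis,'' the ``normal forms for $y_{k-1}$ and $H(x_{k-1})$,'' and the ``reversibility-driven recombination'' of $y_k^{-1}$ with $x_kx_{k-1}^{-1}\hat H(x_k)$ are named but never constructed, and you acknowledge this yourself in the final paragraph. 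As it stands there is no induction hypothesis precise enough to close, so the argument has a genuine gap at its central step. This is not a bookkeeping issue one can wave at: already in the commutative case the Laurentness of these maps is a nontrivial theorem (Fomin--Zelevinsky), and in the non-commutative case it was an open conjecture of Kontsevich; a direct normal-form induction, if it works at all, would itself be the hard theorem.

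For contrast, the paper takes an entirely different, geometric route that sidesteps the cancellation problem. It resolves the birational map $F$ by constructing surfaces $Y_i$ (blow-ups of weighted toric blow-ups of $\mathbb{P}^1\times\mathbb{P}^1$ at the $2n$ points where $H$ vanishes on two toric divisors) on which $F$ becomes a biregular isomorphism $F_i:Y_i\to Y_{i+1}$ preserving the chain of toric divisors $D_i$ (Lemmas \ref{resolution}, \ref{regular_auto}); reversibility of $H$ enters only to make the blow-up locus well defined and $F$-invariant, not through any polynomial identity. It then realizes $A$ as $\Hom_{\widetilde{C}(Y_i)}(Q_i,Q_i)$ in a quotient of the derived category, identifies $F_{nc}$ with $\LDer F_i^*$ plus an explicit isomorphism $j_k$ (Lemma \ref{formula}), shows that $F_{nc}^k(x),F_{nc}^k(y)$ lift to the smaller quotient $\widetilde{D}(Y_0)/\widetilde{D}_{D_0}(Y_0)$ because all cones involved are supported on $D_0$ (Lemma \ref{intheimage}), and finally identifies the endomorphism ring in $\widetilde{D}(Y_0)/\widetilde{D}_B(Y_0)$ with $\mathbf{C}\langle x,x^{-1},y,y^{-1}\rangle$ (Lemma \ref{nc_Laurent_quotient}). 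The cancellation you are trying to engineer by hand is there replaced by a support statement about cones of morphisms of line bundles. If you want to pursue your algebraic route, you would need to actually produce and verify the normal form uniformly in $k$ and in the coefficients $h_i$, which is where the work lies.
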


We call this \textit{the non-commutative Laurent phenomenon}.

Observe that multiplicative commutator $q=x^{-1}y^{-1}xy$ is preserved by $F_{nc}$. In the light of deformation quantization, people often consider algebra, where $q$ is a central element. We'd like to emphasize that we impose no such condition.

A special case of this transformation, where $H(x)=1+x^n$, turns up in the study of cluster mutations. In the article \cite{US0904} we prove the special case of the Laurent phenomenon for $n=2$ using explicite computations with matrices. In \cite{DiFK2} an alternative proof of the Laurent phenomenon for $n=2$ is given, via a combinatorial path-counting argument. It is moreover proved that the coefficients of Laurent polynomials are positive.

The main idea of our proof of Theorem \eqref{main_thm} is as follows. First we resolve birational map $F^k$, namely we construct a sequence of surfaces $Y_i$ and morphisms $\pi_i:Y_i\rightarrow\mathbb{P}^1\times\mathbb{P}^1$, such that the induced birational maps $F_i=\pi_{i+1}^{-1}\circ F\circ \pi_i:Y_i\rightarrow Y_{i+1}$ extend to natural biregular isomorphisms. Here $F$ is as defined previously in affine coordinates $x,y$ on $\mathbb{P}^1\times\mathbb{P}^1$. The surface $Y_i$ is constructed as a blow-up of a toric surface $Y_i^0$, which is a toric weighted blow-up of $\mathbb{P}^1\times\mathbb{P}^1$, at $2n$ points situated on the chain of toric divisors. We denote by $D_i$ the chain of rational curves on $Y_i$ which is the strict transform of toric divisors on $Y_i^0$. In fact the isomorphism $F_i$ sends the chain $D_i$ to the chain $D_{i+1}$.

Next, we construct quotient triangulated category
\[\widetilde{C}(Y_i)=\widetilde{D}(Y_i)/\widetilde{D}^1(Y_i),\]
 where $\widetilde{D}(Y_i)$ is a full subcategory of $D(Y_i)$ the derived category of coherent sheaves on $Y_i$ consisting of objects, which are left orthogonal to $O_{Y_i}$. $\widetilde{D}^1(Y_i)$ is a full subcategory of $\widetilde{D}(Y_i)$ consisting of objects which restrict to $0$ at the generic point. We use some properties of the category $\widetilde{C}(Y_i)$, which are proved in \cite{Us07}. Namely this category is generated\footnote{by shifts and taking cones} by one object $Q_i$, which is the image of the line bundle $\pi_i^*O(1,1)\in \widetilde{D}(Y_i)$. Moreover, we have:
 \[\Hom_{\widetilde{C}(Y_i)}(Q_i,Q_i)=A,\]
  where $A$ is the non-commutative algebra, containing distinguished elements $x,y$. The functor $\LDer F_i^*$ descends to an equivalence of quotient categories $\LDer F_i^*:\widetilde{C}(Y_{i+1})\rightarrow \widetilde{C}(Y_i)$. In \eqref{identification} we write down a specific isomorphism between $Q_i$ and $\LDer F_i^*Q_{i+1}$ in $\widetilde{C}(Y_i)$. This gives us an automorphism $F_{nc}$ of $A$, which doesn't depend on $i$:
  \begin{align*}
\xymatrix{F_{nc}:A=\Hom_{\widetilde{C}(Y_{i+1})}(Q_{i+1},Q_{i+1})\ar[r]^{\LDer F_i^*} & \Hom_{\widetilde{C}(Y_i)}(\LDer F_i^*Q_{i+1},\LDer F_i^*Q_{i+1})\ar[d]\\
  & \Hom_{\widetilde{C}(Y_i)}(Q_i,Q_i)=A.
}
  \end{align*}  

In the Lemma \ref{formula} we compute this automorphism explicitly:
	\[F_{nc}:(x,y)\mapsto (y^{-1}H(x),y^{-1}xy).\]

Therefore we see, that the functor 
\[\LDer \Phi^*=\LDer F_0^* \circ\dots\circ \LDer F_{k-1}^*: \widetilde{C}(Y_k)\rightarrow \widetilde{C}(Y_0)\]
together with the appropriate isomorphism of objects $\Phi^*(Q_k)$ and $Q_0$ in $\widetilde{C}(Y_0)$ induces an automorphism $F_{nc}^k$ of $A$:
\[A=\Hom_{\widetilde{C}(Y_k)}(Q_k,Q_k)\rightarrow \Hom_{\widetilde{C}(Y_0)}(Q_0,Q_0)=A.\]

Then we observe, that morphisms $F_{nc}^k(x),F_{nc}^k(y):Q_0\rightarrow Q_0$ descend from morphisms in the quotient category  $\widetilde{D}(Y_0)/\widetilde{D}_{D_0}(Y_0)$(observe $\widetilde{D}_{D_0}(Y_0)\subset\widetilde{D}^1(Y_0)$), where $\widetilde{D}_{D_0}(Y_0)$ is the subcategory of objects supported on the chain of rational curves $D_0$. Therefore $F_{nc}^k(x),F_{nc}^k(y)$ can be viewed also as elements in the endomorphism algebra of $\pi_0^*O(1,1)$ in the quotient category $\widetilde{D}(Y_0)/\widetilde{D}_B(Y_0)$, where $B$ is the union of $D_0$ and $2n$ exceptional curves of the blow-up of $Y_0^0$. 

Finally we prove in the Lemma \ref{nc_Laurent_quotient} that the image of the natural functor \[\Hom_{\widetilde{D}(Y_0)/\widetilde{D}_B(Y_0)}(Q_0,Q_0)\rightarrow \Hom_{\widetilde{C}(Y_0)}(Q_0,Q_0)=A\]
 is the subalgebra of non-commutative Laurent polynomials 
 \[\mathbf{C}<x,x^{-1},y,y^{-1}>\subset A.\]
 
 As we observed, $F_{nc}^k(x),F_{nc}^k(y)$ belong to this subalgebra, so they are non-commutative Laurent polynomials.
 
 I would like to thank M.Kontsevich, for initiating this direction of research, T.Logvinenko for careful reading and correction of the paper, and J.Ayoub for useful discussions.

\section{Resolution of automorphism}

	The results of this section appear in \cite{Us06} in greater generality. We summarize them here for the convenience of our reader.
	
Consider the birational automorphism of $\mathbb{P}^1\times\mathbb{P}^1$ given by the formula:
 \[F:(x,y)\mapsto(\frac{H(x)}{y},x),\]
 where $H(y)=1+h_1x+...h_{n-1}x^{n-1}+x^n$ is a polynomial of degree $n$. In the homogeneous coordinates $(X:Z)\times(Y:W)$ on $\mathbb{P}^1\times\mathbb{P}^1$ the affine coordinates are expressed as $x=\frac{X}{Z}$, $y=\frac{Y}{W}$.
 
	We are interested in constructing explicitly rational surfaces $Y_0,\dots,Y_k$ equipped with morphisms $\pi_i:Y_i\rightarrow \mathbb{P}^1\times\mathbb{P}^1$ and with biregular isomorphisms $F_i:Y_i\rightarrow Y_{i+1}$, such that the following diagrams commute:
\begin{align}\label{existence_resolution}
\xymatrix{ Y_i \ar[r]^{F_i} \ar[d]_{\pi_i} & Y_{i+1} \ar[d]^{\pi_{i+1}} \\
\mathbb{P}^1\times\mathbb{P}^1 \ar[r]_{F} & \mathbb{P}^1\times \mathbb{P}^1},
\end{align}  
  
	Let us define two series of vectors in $\mathbf{Z}^2$ by a recursive relation:
 \[ p_0=(0,1), p_1=(-1,0), p_{i+1}=np_i-p_{i-1};\]
 \[ t_0=(1,0), t_1=(0,-1), t_{i+1}=nt_i-t_{i-1}.\]

	Consider toric surfaces $Y_i^0$ given by the fan spanned by vectors:
\[\{p_i,\dots,p_0,t_0,t_1,t_2,\dots,t_{n+2-i}\}.\]

Surface $Y_i$ is constructed as a blow-up of the surface $Y_i^0$ in $2n$ points. Fans of surfaces $Y_i^0$ contain sub-fan $\{p_1,p_0,t_0,t_1\}$, which defines a surface $\mathbb{P}^1\times\mathbb{P}^1$, so they admit natural toric projections to it. We can actually think of them as weighted blow-ups of $\mathbb{P}^1\times\mathbb{P}^1$. We use standard notations $(x,y)$ for the coordinates on toric surfaces. Namely if a vector $(a,b)$ corresponds to a toric divisor, then rational function $\frac{x^b}{y^a}$ induces a canonical(up to taking an inverse) rational coordinate on this divisor. By a canonical coordinate on a divisor $D$ we will mean a rational function, which induces an isomorphism of $D$ with $\mathbb{P}^1$. On each surface $Y_i^0$ toric divisors form a chain of rational curves. Their strict transforms form a chain of rational curves on the blow-up $Y_i$ and the canonical rational coordinates lift from each curve to its strict transform.

The toric divisors corresponding to vectors $t_i$ and $p_i$ will be denoted $T_i$ and $P_i$ respectively. Let $x$ be the canonical coordinate on $P_0$, and $y$ the canonical coordinate on $T_0$. Note that intersection points with other toric divisors have coordinates $0$ and $\infty$. 

We begin with lemma, which shows how to resolve birational transformation: $(x,y)\mapsto (\frac{H(x)}{y},x)$. Let $Z_1^0$ be the toric surface corresponding to the fan: $\{p_1,p_0,t_0,t_1,t_2\}$, and let $Z_2^0$ be the toric surface corresponding to the fan $\{p_2,p_1,p_0,t_0,t_1\}$. 

\begin{align}
\xymatrix{ Z_1 \ar[r]^{G} \ar[d] & Z_2 \ar[d] \\
Z_1^0 \ar[r] \ar[d]_{r_1} & Z_2^0 \ar[d]^{r_2} \\
P^1\times P^1 \ar[r]_{F} & P^1\times P^1}
\end{align} 

The surface $Z_1$ is a blow-up of $Z_1^0$ in the $n$ points on curve $P_0$, where $H(x)=0$. The surface $Z_2$ is a blow-up of $Z_2^0$ in the $n$ points on $T_0$ where $H(y)=0$.

\begin{lemma}\label{resolution}
	For any reversible polynomial $H$ with distinct roots, the induced map $G$ is a regular isomorphism of surfaces $Z_1$, $Z_2$. Moreover it preserves the canonical coordinates on the chain of strict transforms of toric divisors.
\end{lemma}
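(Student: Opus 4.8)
The plan is to verify the lemma by explicit local computation, organized around the factorization $F=\sigma\circ\tau$ with $\tau(x,y)=(x,\,y/H(x))$ a shear and $\sigma(x,y)=(1/y,\,x)$ a monomial (toric) map: $\sigma$ carries all the toric combinatorics — hence the passage from the fan of $Z_1^0$ to that of $Z_2^0$ — while the non-toric part $\tau$ is exactly what the $2n$ point blow-ups resolve. Since $G$ is the strict transform of $F$ and $Z_1,Z_2$ are normal projective surfaces, it suffices to show that $G$ and $G^{-1}$ are everywhere-defined morphisms; then $G\circ G^{-1}$ and $G^{-1}\circ G$ are the identity, so $G$ is biregular, and the statement about canonical coordinates will drop out of the same local charts. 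First I would record, by writing $F$ in homogeneous coordinates, that $F$ is undefined precisely at the torus-fixed point $(\infty,\infty)$ and at the $n$ points $(\alpha_i,0)$ (with $\alpha_1,\dots,\alpha_n$ the distinct nonzero roots of $H$, all lying on $P_0=\{y=0\}$), while $F^{-1}$ is undefined precisely at $(\infty,\infty)$ and at the $n$ points $(0,\alpha_i)$ on $T_0=\{x=0\}$; and that $r_1$ is the weighted blow-up of $(\infty,\infty)$ introducing the single new divisor $T_2$ together with an $A_{n-1}$ quotient singularity in the cone $\langle t_2,p_1\rangle$, after which $Z_1\to Z_1^0$ blows up the smooth points $(\alpha_i,0)$, and symmetrically for $r_2$ and $Z_2$.

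Next I would handle the toric part. Substituting the monomial coordinates of each of the five two-dimensional cones of $Z_1^0$ into $F$, one checks that $F$ extends to a morphism there with image in a toric chart of $Z_2^0$, carrying the toric divisors by the cyclic shift $P_1\mapsto P_2$, $P_0\mapsto P_1$, $T_0\mapsto P_0$, $T_1\mapsto T_0$, $T_2\mapsto T_1$ and identifying, in each of the five cases, the canonical coordinate of the source curve with that of the target (up to inversion). The three smooth charts meeting $P_0$, $T_1$, $P_1$ are routine; the features to note are that the constant term $H(0)=1$ is what prevents $F$ from contracting $T_0$ and $T_1$, and that $\deg H=n$ is what forces the weight-$n$ divisor $P_2$ on the target. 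The one delicate chart is the singular cone $\langle t_2,p_1\rangle$: presenting its coordinate ring as $\mathbf{C}[a,b,c]/(b^n-ac)$ with $a=1/y$, $b=1/x$, $c=y/x^n$, the relation $b^n=ac$ rewrites $H(x)/y$ as $\hat H(b)/c$ with $\hat H(0)=1$ ($\hat H$ being $H$ read backwards), and as $c=0$ forces $b=0$ here, $F$ is already regular on this chart and matches the two $A_{n-1}$ points.

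The remaining step, and the main obstacle, is the non-toric part near the roots. Over each $(\alpha_i,0)\in P_0$ write $H(x)=(x-\alpha_i)\widetilde H_i(x)$ with $\widetilde H_i(\alpha_i)=H'(\alpha_i)\neq0$; here the distinct-roots hypothesis is essential, as it makes the base ideal of $F$ at $(\alpha_i,0)$ the maximal ideal, so that one blow-up resolves it. In the two standard charts of the blow-up of $Z_1^0$ at $(\alpha_i,0)$ a direct substitution shows that $F$ extends to a morphism which sends the exceptional curve $E_i\subset Z_1$ isomorphically onto the strict transform of the non-toric curve $\{y=\alpha_i\}$ in $Z_2$, and sends the strict transform of $\{x=\alpha_i\}$ in $Z_1$ isomorphically onto the exceptional curve of $Z_2$ over $(0,\alpha_i)$ — which is just the indeterminacy point of $F^{-1}$, so the same computation read backwards gives that $G^{-1}$ is a morphism there too.

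Combining the three steps, $G$ and $G^{-1}$ are everywhere-defined, hence $G$ is a biregular isomorphism, and the five identifications of the toric step are exactly the asserted preservation of canonical coordinates. I expect the hard part to be twofold. First, seeing that the lone weighted blow-up of $(\infty,\infty)$ suffices even though it does not resolve $F$ over a \emph{smooth} model: the explanation is that the would-be further blow-ups are absorbed into the $A_{n-1}$ singular point, which is the same germ on $Z_1^0$ and on $Z_2^0$, so $G$ transports it isomorphically. Second, the root-by-root bookkeeping, where over each $\alpha_i$ one must correctly match the exceptional curve produced on one side with the strict transform of the non-toric curve $\{x=\alpha_i\}$ or $\{y=\alpha_i\}$ on the other; it is in making these matchings (and the defining point-sets on $P_0$ and $T_0$) symmetric under $x\leftrightarrow y$ that the reversibility of $H$ enters.
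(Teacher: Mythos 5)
Your proposal is correct and follows essentially the same route as the paper: a chart-by-chart verification that $G$ is regular and matches coordinates on the toric charts of $Z_1^0$, at the $A_{n-1}$ point of the cone $\langle t_2,p_1\rangle$ (where reversibility turns $H(x)/y$ into a unit times a monomial), and at the $n$ blown-up points over the roots of $H$, where distinctness of the roots makes each base ideal maximal. The factorization $F=\sigma\circ\tau$ and the up-front identification of the base loci of $F$ and $F^{-1}$ are organizational refinements rather than a different argument.
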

\begin{proof}
	
	We denote by $C_{ab}$ the cone in $\mathbf{R}^2$ spanned by vectors $a,b$. Such cones correspond to toric points, and we have coordinates in the neighbourhoood of these points on $Z_1^0$. 
	
The coordinates near toric point $C_{p_1,p_0}$ on $Z_1^0$ are $(x^{-1},y)$;\\
near $C_{p_0,t_0}$ are $(x,y)$;\\
near $C_{t_0,t_1}$ are $(x,y^{-1})$;\\
near $C_{t_1,t_2}$ are $(x^{-1},x^ny^{-1})$;\\
and near $C_{t_2,p_1}$, which is a singular toric point, are $(x^{-1},y^{-1},x^{-n}y)$.

When we blow-up surface $Z_1^0$ at $n$ points on $P_0$, we pull-back coordinates near toric points to $Z_1$, so the coordinates near pull-back of $C_{p_1,p_0}$ on $Z_1$ are $(x^{-1},\frac{y}{H(x^{-1})})$;\\
near pull-back of $C_{p_0,t_0}$ are $(x,\frac{y}{H(x)})$. 

The coordinates near other pull-backs are the same as on $Z_1^0$.

Birational transformation $F$ of $P^1\times P^1$ lifts to a birational map $F^0:Z_1^0\rightarrow Z_2^0$. Under this map toric divisors $P_1,P_0,T_0,T_1,T_2$ go to divisors $P_2,P_1,P_0,T_0,T_1$ respectively. We now prove that this map is regular everywhere except at $n$ points on the divisor $P_0$ where $H(x)=0$. Because $H$ has distinct roots, all these points are different.

To avoid confusion, we denote by $(u,v)$ the rational coordinates on $Z_2^0$ and on $Z_2$, so that $G^*u=\frac{H(x)}{y}$,$G^*v=x$. The map $G$ sends the neighbourhood of the point $C_{p_2,p_1}$ to the neighbourhood of the point $C_{p_1,p_0}$:

\[G^*:\mathbf{C}[u^{-1}v^n,v^{-1}]\rightarrow \mathbf{C}[x^{-1},\frac{y}{H(x^{-1})}],\]
\[G^*(u^{-1}v^n,v^{-1})=(\frac{x^ny}{H(x)},x^{-1})=(\frac{y}{H(x^{-1})},x^{-1}).\]

It is an isomorphism of affine neighbourhoods. The canonical coordinate $u^{-1}v^n$ of $P_2$ on $Z_2$ goes to $\frac{y}{H(x^{-1})}$, which is equal to $y$ on $P_1$, because divisor $P_1$ is defined by $x^{-1}=0$. The canonical coordinate $v^{-1}$ on $P_1$ goes to the canonical coordinate $x^{-1}$ on $P_0$.

We do similar verifications for other pull-backs of toric points. For the neighbourhood of $G^*(C_{p_1,p_0})=C_{p_0,t_0}$ we have:

\[G^*:\mathbf{C}[u^{-1},v]\rightarrow \mathbf{C}[x,\frac{y}{H(x)}],\]
\[G^*(u^{-1},v)=(\frac{y}{H(x)},x).\]

It is again an isomorphism of affine neighbourhoods, and the canonical coordinate $u^{-1}$ on $P_0$ goes to $\frac{y}{H(x)}$, which is equal to $y$ on $T_0$, because $T_0$ is defined by $x=0$ in this neighbourhood.

For the neighbourhood of $G^*(C_{p_0,t_0})=C_{t_0,t_1}$ we have:

\[G^*:\mathbf{C}[\frac{u}{H(v)},v]\rightarrow \mathbf{C}[x,y^{-1}],\]
\[G^*(u,v)=(\frac{H(x)}{yH(x)},x)=(y^{-1},x).\]

It is an isomorphism of affine neighbourhoods. The canonical coordinate $v$ on $T_0$ goes to $x$ on $T_1$. 
For the neighbourhood of $G^*(C_{t_0,t_1})=C_{t_1,t_2}$ we have:

\[G^*:\mathbf{C}[\frac{u}{H(v^{-1})},v^{-1}]\rightarrow \mathbf{C}[x^{-1},x^ny^{-1}],\]
\[G^*(\frac{u}{H(v^{-1})},v^{-1})=(\frac{H(x)}{yH(x^{-1})},x^{-1})=(x^ny^{-1},x^{-1}).\]

It is an isomorphism of affine neighbourhoods. The canonical coordinate $u$ on $T_1$ goes to $\frac{H(x)}{y}$ on $T_2$, but $T_2$ is defined by $x^{-1}=0$, so we can write $\frac{H(x)}{y}=(x^ny^{-1})H(x^{-1})=x^ny^{-1}$. This proves that map $G$ preserves canonical coordinates on toric divisors. 

The four neighbourhoods that we considered provide the covering of $Z_1$ except at the point $C_{t_2,p_1}$ and at $n$ points, each lying on the exceptional curve of the blow-up. We verify, that at these points $G$ is also an isomorphism.  

For the neighbourhood of $G^*(C_{t_1,t_2})=C_{t_2,p_1}$ we have:

\[F^*:\mathbf{C}[u^{-1},v^{-1},uv^{-n}]\rightarrow \mathbf{C}[x^{-1},y^{-1},x^{-n}y],\]
\[F^*(u^{-1},v^{-1},uv^{-n})=(\frac{y}{H(x)},x^{-1},\frac{H(x)}{x^ny})=((x^{-n}y)H(x^{-1})^{-1},H(x^{-1})y^{-1}).\]

This map is well defined outside the divisor $H(x^{-1})=0$. The point $C_{t_2,p_1}$ doesn't belong to this divisor, so $G$ is well defined at this point.

If $\lambda$ is a root of polynomial $H$, then we have coordinates $(\frac{x-\lambda}{y},y)$ near the point on the exceptional curve, where we have to verify that $G$ is regular. The coordinates near the corresponding point on $Z_2$ are $(u,\frac{v^{-1}-\lambda^{-1}}{u})$. It is straitforward to see that $G^*$ defines an isomorphism of local rings.  

\end{proof}

Recall that we defined the toric surface $Y_i^0$ as given by the fan 
\[\{p_{i},\dots,p_1,p_0,t_0,t_1,\dots,t_{n+1-i}\}.\]

 Let's blow it up at $n$ points where $P_0$ intersects $H(x)=0$, and at $n$ points where $T_0$ intersects $H(y)=0$. Here $x$ and $y$ are the canonical coordinates on $P_0$ and $T_0$ respectively. The canonical coordinate are defined up to an inverse, so the polynomial $H$ needs to be reversible, for the blow up not to depend on the choice of a coordinate. Let us denote this blow-up by $Y_i$. Let $D_i\subset Y_i$ be the strict transform of toric divisors under this blow-up. 

As a corollary of Lemma \ref{resolution} we have:

\begin{lemma}\label{regular_auto}
 	If the polynomial $H$ has distinct roots and is reversible, then the map $F$ induces a regular automorphism $F_i$ between $Y_i$ and $Y_{i+1}$. Moreover $F_i(D_i)=D_{i+1}$.
\end{lemma}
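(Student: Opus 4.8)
We have to show that the rational map $F_i=\pi_{i+1}^{-1}\circ F\circ\pi_i\colon Y_i\to Y_{i+1}$ --- which a priori is only birational, and which is the unique lift of $F$ to $Y_i,Y_{i+1}$ --- is in fact biregular, and that it carries $D_i$ onto $D_{i+1}$. I would prove this by a chart-by-chart computation extending the one in the proof of Lemma~\ref{resolution} to the longer chains of toric divisors of $Y_i^0$ and $Y_{i+1}^0$. Cover $Y_i$ by: (i) the affine charts at the torus-fixed points of $Y_i^0$ lying on a toric divisor other than $P_0$ or $T_0$, namely at the cones $C_{p_{j+1},p_j}$ and $C_{t_j,t_{j+1}}$ with $j\ge1$ and at the (possibly singular) cone $C_{t_{n+1-i},p_i}$ closing the fan; (ii) the four torus-fixed charts on $P_0$ and $T_0$ together with the ``large'' charts with coordinates like $(x,\tfrac{y}{H(x)})$ and $(\tfrac{x}{H(y)},y)$ covering the strict transforms of $P_0,T_0$ away from their endpoints; (iii) charts of the form $(\tfrac{x-\lambda}{y},y)$, $\lambda$ a root of $H$, covering the remaining points of the $2n$ exceptional curves. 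Since the $2n$ blown-up points are not torus-fixed, the charts in (i) and the torus-fixed charts in (ii) are literally the toric charts of $Y_i^0$; and $Y_{i+1}$ is covered by charts of the same three kinds. All the isomorphisms below are restrictions of the one birational map $F_i$, so they will automatically glue.

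For the charts of types (ii) and (iii) near $P_0$ and near the roots of $H(x)$: the cones $C_{p_1,p_0},C_{p_0,t_0},C_{t_0,t_1}$ and the blow-up of the $n$ points $\{H(x)=0\}\subset P_0$ occur identically in $Z_1^0$ and in $Y_i^0$ (resp.\ in $Z_2^0$ and $Y_{i+1}^0$), so the local computations in the proof of Lemma~\ref{resolution} apply verbatim: there $F_i$ is biregular, and it carries the canonical coordinate on $P_1,P_0,T_0,T_1$ to that on $P_2,P_1,P_0,T_0$. The charts near $T_0$ and the roots of $H(y)$ are handled by the very same computations with $x$ and $y$ exchanged; this is the coordinate swap $\sigma\colon(x,y)\mapsto(y,x)$, which exchanges $p_j\leftrightarrow t_j$, hence $Y_j^0\leftrightarrow Y_{n+1-j}^0$ and the two blow-up loci, and conjugates $F$ into $F^{-1}$. (Note the $T_0$-blow-up already occurs on the target side of $G$ in Lemma~\ref{resolution}, via the coordinate $\tfrac{u}{H(v)}$; the source-side version is the identical computation, using $H(x)=x^nH(1/x)$.)

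For the charts of type (i), those disjoint from $P_0\cup T_0$, the map $F$ is toric up to a unit. Using reversibility in the form $H(x)=x^nH(1/x)$ we may write $F(x,y)=\bigl(x^ny^{-1}\cdot H(1/x),\,x\bigr)=M\circ T$, where $T\colon(x,y)\mapsto(x^ny^{-1},x)$ is a monomial map and $M\colon(u,v)\mapsto(u\,H(1/v),v)$. Now $H(1/x)=H(x)/x^n$ is regular and nowhere vanishing away from $T_0$ and the $n$ ``vertical'' divisors over the roots of $H$; every chart of type (i) lies in the region $x\to\infty$, on which $H(1/x)$ is such a unit, so there $M$ is an isomorphism and $F_i$ has the same regularity and the same effect on divisors as $T$. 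The map $T$ is the toric morphism whose action on the cocharacter lattice is $\phi=\left(\begin{smallmatrix}n&-1\\1&0\end{smallmatrix}\right)$, and from the recursions defining $p_j,t_j$ one checks $\phi(p_j)=p_{j+1}$ ($j\ge0$) and $\phi(t_j)=t_{j-1}$ ($j\ge1$). Since $\det\phi=1$, $\phi$ is a lattice automorphism, so it carries each cone $C_{p_{j+1},p_j}$, $C_{t_j,t_{j+1}}$ ($j\ge1$) and the closing cone $C_{t_{n+1-i},p_i}$ isomorphically onto $C_{p_{j+2},p_{j+1}}$, $C_{t_{j-1},t_j}$, $C_{t_{n-i},p_{i+1}}$, i.e.\ onto the corresponding cones of $Y_{i+1}^0$ --- in particular the singular closing cone goes to the singular closing cone of $Y_{i+1}^0$, of the same type --- and so $F_i$ is biregular on these charts as well.

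Gluing the chart-isomorphisms gives a biregular isomorphism $F_i\colon Y_i\to Y_{i+1}$. Finally $D_i=\bigcup_{0\le j\le i}P_j\cup\bigcup_{0\le j\le n+1-i}T_j$, and the computations above show $F_i$ maps its components to those of $D_{i+1}=\bigcup_{0\le j\le i+1}P_j\cup\bigcup_{0\le j\le n-i}T_j$ by $P_j\mapsto P_{j+1}$, $T_0\mapsto P_0$, $T_j\mapsto T_{j-1}$ ($j\ge1$), preserving canonical coordinates; hence $F_i(D_i)=D_{i+1}$. The main point needing care is the bookkeeping of charts: verifying that the ``far'' charts of type (i) really are disjoint both from the $2n$ blown-up points and from the non-toric factor $H(1/x)$ of $F$ --- i.e.\ that they lie in the region $x\to\infty$, which is exactly where reversibility is used --- and checking that the singular closing cone is respected. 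Granting this, every chart is treated either by Lemma~\ref{resolution} verbatim or by the elementary toric computation above.
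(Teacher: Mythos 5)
Your overall plan (direct chart-by-chart verification on $Y_i$) is workable in principle, but the argument you give for the ``far'' charts of type (i) is wrong, and this is not a cosmetic slip. You claim that every type (i) chart ``lies in the region $x\to\infty$'' where $H(1/x)$ is a unit, so that $F=M\circ T$ with $M$ an isomorphism there. But every two-dimensional toric chart of $Y_i^0$ contains the dense torus, hence contains the $n$ curves $\{x=\lambda\}$, $\lambda$ a root of $H$, on which $H(1/x)=H(x)/x^n$ vanishes. On these curves $M$ is not an isomorphism: $F$ contracts each curve $\{x=\lambda\}$ to the single point $(u,v)=(0,\lambda)$ on $T_0$ of the target, i.e.\ to one of the $n$ blown-up points of $Y_{i+1}$. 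Consequently the image of a type (i) source chart is \emph{not} contained in the pullback of the corresponding toric chart of $Y_{i+1}^0$ (the strict transform of $\{x=\lambda\}$ must land on the exceptional curve over $(0,\lambda)$, which lies over $T_0$ and hence outside the chart $U_{C_{t_{j-1},t_j}}$ for $j\ge 2$), and $F_i$ restricted to such a chart is certainly not the toric isomorphism predicted by the lattice map $\phi$. This is exactly the phenomenon that the proof of Lemma~\ref{resolution} confronts in the chart $C_{t_1,t_2}$, whose image is the \emph{blow-up} chart with coordinates $\bigl(\tfrac{u}{H(v^{-1})},v^{-1}\bigr)$ on $Z_2$; your covering misclassifies $C_{t_1,t_2}$ as type (i) and never invokes that computation, and for the charts $C_{t_j,t_{j+1}}$, $C_{p_{j+1},p_j}$ with $j\ge 1$ you would need an analogous non-toric analysis along the strict transforms of $\{x=\lambda\}$ and $\{y=\lambda\}$, which is missing.

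For comparison, the paper avoids redoing this delicate analysis on every chart by a different decomposition: it realizes $Y_i$ as an iterated blow-up of $Z_1$ (weighted toric blow-ups at torus-fixed points to introduce the extra divisors $P_a,T_b$, followed by the blow-up of the $n$ points of $T_0$ where $H(y)=0$), and likewise $Y_{i+1}$ from $Z_2$. Since Lemma~\ref{resolution} already provides the isomorphism $G\colon Z_1\to Z_2$ preserving canonical coordinates, it only remains to check that the additional blow-up centers correspond under $G$ --- the weights of the toric blow-ups match because $p_{a+1}$ is expressed in $(t_{b},p_a)$ by the same coefficients as $p_a$ in $(t_{b+1},p_{a-1})$, and the $n$ points on $T_0$ go to the $n$ points on $P_0$ because canonical coordinates are preserved. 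All the non-toric behaviour along $\{H=0\}$ is thereby confined to Lemma~\ref{resolution}. If you want to keep your direct approach, you must either extend the $C_{t_1,t_2}$-type computation to all the far charts, or restructure the covering so that the contracted curves $\{x=\lambda\}$, $\{y=\lambda\}$ are handled once and for all as in Lemma~\ref{resolution}.
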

\begin{proof}
 	Note that the surface $Y_i$ can be obtained from the surface $Z_1$ of previous lemma, by making two kinds of blow-ups. First, we perform weighted blow-ups to introduce toric divisors $P_{i+1},\dots,P_2,T_3,\dots,T_{n+1-i}$. Then we to blow-up $n$ points on the divisor $T_0$, defined by the equation $H(y)=0$. We blow-up $Z_2$ in a similar fashion to obtain $Y_{i+1}$. By Lemma \ref{resolution}, the map $F$ lifts to regular map $G$ from $Z_1$ to $Z_2$. Divisor $P_3$ on $Z_2$ is a weighted blow-up at the point $C_{t_1,p_2}$. The weights are determined using expression of the vector $p_3$ as the linear combination of $t_1$ and $p_2$. But this expression is the same as the expression of the vector $p_2$ as the linear combination of $t_2$ and $p_1$. So $G$ sends the weighted blow-up corresponding to $P_3$ to the weighted blow-up corresponding to $P_2$. The same argument works for other toric divisors $P_a,T_b$. The toric divisors $P_{i+1},\dots,P_2$ are then maped to $P_{i+2},\dots,P_3$, as well as $T_3,\dots,T_{n+1-i}$ are mapped to $T_2,\dots,T_{n-i}$. 
 	
 Also $n$ points on $T_0$ where $H(y)=0$ are mapped to $n$ points on $P_0$ where $H(x)=0$, because the canonical coordinates are preserved by $G$ by the previous lemma. Therefore, the blow-ups we do to $Z_1$ to produce $Y_i$ correspond under isomorphism $G$ precisely to the blow-ups we do to $Z_2$ to produce $Y_{i+1}$, and hence $G$ lifts to an isomorphism $F_i:Y_i\rightarrow Y_{i+1}$. The last statement of lemma is also clear.
 
\end{proof}

This lemma implies, that we have a regular isomorphism of surfaces:
\[\Phi=F_{k-1}\circ\dots\circ F_0:Y_0\rightarrow Y_k.\]

\section{DG-category associated to a rational surface}

	Let $D(Y_i)$ denote the bounded derived category of coherent sheaves on $Y_i$. By Lemma \ref{regular_auto} we have a functor 
	\[\LDer F_i^*: D(Y_{i+1})\xrightarrow{\sim} D(Y_i),\]
	which is an equivalence of triangulated categories.
	
	In \cite{Us07} we've introduced the notion of $\widetilde{D}(Y_i)$ full triangulated subcategory of $D(Y_i)$ which consists of objects $E$ for which $\RHom_{Y_i}(E,O_{Y_i})=0$. As
	\[\LDer F_i^*O_{Y_{i+1}}=O_{Y_i},\]
	$\LDer F_i^*$ restricts to an equivalence
\[\LDer F_i^*:\widetilde{D}(Y_{i+1})\rightarrow \widetilde{D}(Y_i).\]

	Let $\pi_i:Y_i\rightarrow\mathbb{P}^1\times\mathbb{P}^1$ be the natural projections. Recall that $\widetilde{D}(\mathbb{P}^1\times\mathbb{P}^1)$ is generated by three objects: 
	\[\widetilde{D}(\mathbb{P}^1\times\mathbb{P}^1)=<O(1,0),O(0,1),O(1,1)>.\]
	
	Denote by $Q_i=\pi_i^*O(1,1)\in \widetilde{D}(Y_i)$ the pull-back of the line bundle $O(1,1)$ by $\pi_i$. 
	
	Let $D^1(Y_i)$ be the full subcategory of $D(Y_i)$ consisting of objects whose support is at most a divisor, in other words we take objects of $D(Y_i)$ which restrict to $0$ at the generic point of $Y_i$. Let $D^1_{D_i}(Y_i)$ be the subcategory of $D^1(Y_i)$ consisting of objects supported on $D_i$, the union of all divisors $T_a$ and $P_b$.
	
	Observe that $\LDer F_i^*$ takes subcategories $D_{D_{i+1}}^1(Y_{i+1})$ and $D^1(Y_{i+1})$ to subcategories $D_{D_i}^1(Y_i)$ and $D^1(Y_i)$ respectively. This is because $F_i$ is a regular isomorphism, and $F(D_i)=D_{i+1}$. Let also:
	
\[\widetilde{D}^1(Y_i)=D^1(Y_i)\cap\widetilde{D}(Y_i),\]
\[\widetilde{D}_{D_i}(Y_i)=D_{D_i}^1(Y_i)\cap\widetilde{D}(Y_i).\]

	The non-commutative cluster mutations appear, when we look at the factor category 
\[\widetilde{C}(Y_i)=\widetilde{D}(Y_i)/\widetilde{D}^1(Y_i).\]

	It is proved in \cite{Us07}, that this category is a birational invariant of a variety. For rational surfaces it is generated by one object $Q_i$, and moreover
	\[\Hom_{\widetilde{C}(Y_i)}(Q_i,Q_i)=A,\]
	where $A$ is a non-commutative algebra. This algebra is a natural setting for non-commutative cluster mutations. Let us recall some properties of this algebra $A$. First of all there is an embedding $i:\mathbf{C}<x,y>\hookrightarrow A$, and there is a natural map $\phi:A\rightarrow\mathbf{C}(x,y)$. Moreover the kernel of the map $\phi$ is a commutator ideal of $A$:
	\[\ker(\phi)=A[A,A].\]
	We also have the following property: any $a\in A$ with $\phi(a)\ne 0$ is invertible.
	
	We now choose a way to identify objects $Q_i$ and $F^*Q_{i+1}$ in $\widetilde{C}(Y_i)$. This will induce a map on endomorphism ring of object, so we will get a map $F_{nc}:A\rightarrow A$, which we will compute explicitely.
	
Recall, that $F$ induces a regular map from $Z_1$ to $Z_2$, and we lift it after making some blow-ups to a regular map from $Y_i$ to $Y_{i+1}$. So we can choose an identification of $O_{Z_1}(1,1)$ and $G^*O_{Z_2}(1,1)$ on $Z_1$ in $\widetilde{C}(Z_1)$, and then lift this identification to $\widetilde{C}(Y_i)$. 

	The surface $Z_2$ is the blow-up of toric surface $Z_2^0$ at $n$ points on the toric divisor $T_0$. We identify this divisor with its strict transform. Denote by $E$ the exceptional curve of this blow-up. It is the union of $n$ rational curves. Also $Z_2$ has a chain of rational curves $P_2,P_1,P_0,T_0,T_1$. And we have linear equivalences of divisors:
	 \[O_{Z_2}(0,1)=P_0=T_1+P_2,\]
	 \[O_{Z_2}(1,0)=T_0+E=P_1+nP_2.\] 

	The divisor $O_{Z_2}(1,1)$ is therefore linearly equivalent to $T_1+P_1+(n+1)P_2$. Then we compute its pull-back by $G$ to $Z_1$:
	\[G^*O_{Z_2}(1,1)=G^*(T_1+P_1+(n+1)P_2)=T_2+P_0+(n+1)P_1.\]
	
	On $Z_1$ we have a chain of rational curves $P_1,P_0,T_0,T_1,T_2$, and we have the exceptional curve $C$ of the blow-up of $P_0$ at $n$ points.	Note that the effective divisor $O_{Z_1}(1,1)(-C)=P_0+P_1+T_2$ is dominated by $G^*O_{Z_2}(1,1)=T_2+P_0+(n+1)P_1$, so we have a natural morphism of line bundles 
	\[i_0:O_{Z_1}(1,1)(-C)\xrightarrow{nP_1} G^*O_{Z_2}(1,1).\]
	
	 There is also a unique up to scalar multiplication map of line bundles $i_1:O_{Z_1}(1,0)\xrightarrow{P_0} O_{Z_1}(1,1)(-C)$, which lifts the map $O(1,0)\rightarrow O(1,1)$ on $\mathbb{P}^1\times\mathbb{P}^1$, which vanishes along the divisor $P_0$. Finally there is a map $i_3:O_{Z_1}(1,0)\xrightarrow{T_1+nT_2} O_{Z_1}(1,1)$, which vanishes along the divisor $T_1$. All in all, we have the following sequence of maps of line bundles on $Z_1$:
\begin{align}\label{identification}	
	G^*O_{Z_2}(1,1)\xleftarrow{i_1} O_{Z_1}(1,1)(-C)\xleftarrow{i_2} O_{Z_1}(1,0)\xrightarrow{i_3} O_{Z_1}(1,1).
\end{align}

	If we consider the line bundles in this diagram as objects of the derived category of coherent sheaves $D(Z_1)$ then they belong to $\widetilde{D}(Z_1)$. If we pull \eqref{identification} back to $Y_i$, the objects will belong to $\widetilde{D}(Y_i)$. We now claim that the cones of the morphisms in \eqref{identification} belong to $D_{D_i}^1(Y_i)$. Indeed, on the surface $Z_1$ the object $Cone(i_1)$ is supported on $P_1$, $Cone(i_2)$ is supported on $P_0$, $Cone(i_3)$ is supported on $T_1\cup T_2$. Blowing up $n$ points on the curve $T_0$ doesn't change this. Therefore on $Y_i$ all these cones belong to $D_{D_i}(Y_i)$. Consequently, the morphisms in \eqref{identification} become isomorphisms in the quotient categories $\widetilde{D}(Y_i)/\widetilde{D}_{D_i}(Y_i)$ and $\widetilde{C}(Y_i)=\widetilde{D}(Y_i)/\widetilde{D}^1(Y_i)$. In the later category we thus obtain a particular isomorphism $j_k:Q_i\rightarrow F_i^*Q_{i+1}$. This isomorphism allows us to define an automorphism of the ring $A$:
\begin{align}\label{nc_definition}
\xymatrix{
	F_{nc}:A=\Hom_{\widetilde{C}(Y_{i+1})}(Q_{i+1},Q_{i+1})\ar[r]^{\LDer F_i^*} & \Hom_{\widetilde{C}(Y_i)}(F_i^*Q_{i+1},F_i^*Q_{i+1}) \ar[d]^{j_k^*}\\
	 & A=\Hom_{\widetilde{C}(Y_i)}(Q_i,Q_i).
}
\end{align}

\begin{lemma}\label{formula}
	The map $F_{nc}$ is given by
	\[F_{nc}:(x,y)\mapsto (y^{-1}H(x),y^{-1}xy)\]
\end{lemma}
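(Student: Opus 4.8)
Since $A$ is generated by $x$ and $y$ together with inverses and $F_{nc}$ is a ring automorphism, it suffices to identify the two endomorphisms $F_{nc}(x),F_{nc}(y)$ of $Q_i$. I would run the whole computation on the surfaces $Z_1,Z_2$ of Lemma \ref{resolution} rather than on $Y_i$: the object $Q_i$, the endomorphisms $x,y$, and the isomorphism $j_k$ of \eqref{identification} (whose constituent cones have already been checked to lie in $\widetilde{D}^1$) are all pulled back from $Z_1$, respectively from $Z_2$, and since $\widetilde{C}$ is a birational invariant the computation of $F_{nc}$ can be carried out entirely on $Z_1,Z_2$; the resulting formula will not mention the blow-ups relating $Z_1$ to $Y_i$, so $F_{nc}$ will automatically be independent of $i$.

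The first step is to describe $x,y$ as explicit morphisms in $\widetilde{C}$. On $\mathbb{P}^1\times\mathbb{P}^1$ the two defining sections $X,Z$ of $O(1,0)$ give maps $O(0,1)\xrightarrow{\cdot X}O(1,1)$ and $O(0,1)\xrightarrow{\cdot Z}O(1,1)$ whose cones are the line bundles $O(1,1)|_{\{X=0\}}$, $O(1,1)|_{\{Z=0\}}$ on the toric divisors $\{X=0\}$, $\{Z=0\}$; these lie in $\widetilde{D}^1$, so both maps are isomorphisms in $\widetilde{C}$, and $x:=(\cdot X)\circ(\cdot Z)^{-1}$ is a well-defined automorphism of $Q=O(1,1)$ with $\phi(x)=X/Z$. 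Likewise $y:=(\cdot Y)\circ(\cdot W)^{-1}$, through $O(1,0)$, with $\phi(y)=Y/W$. Pulling these roofs back along $r_1\colon Z_1\to\mathbb{P}^1\times\mathbb{P}^1$ gives the corresponding elements of $A=\Hom_{\widetilde{C}(Z_1)}(O_{Z_1}(1,1),O_{Z_1}(1,1))$, and the same for $Z_2$ in the target coordinates $u,v$.

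The second step is to compute $\LDer F_i^*$, which after passing through the projections is the action of $G^*$. By Lemma \ref{resolution} we have $G^*u=H(x)/y$, $G^*v=x$, so $G^*$ sends the roof defining $x$ on $Z_2$ to ``multiplication by $H(x)/y$'' on $G^*O_{Z_2}(1,1)$ and the one defining $y$ to ``multiplication by $x$''. To turn these rational endomorphisms into honest words in $x,y$ one must factor them through maps of line bundles on $Z_1$; here I would use the linear equivalences recorded before \eqref{identification} — $O_{Z_2}(0,1)\sim P_0\sim T_1+P_2$, $O_{Z_2}(1,0)\sim T_0+E\sim P_1+nP_2$, $O_{Z_2}(1,1)\sim T_1+P_1+(n+1)P_2$ — together with the fact that $G^*$ pulls the chain $P_2,P_1,P_0,T_0,T_1$ on $Z_2$ back to the chain $P_1,P_0,T_0,T_1,T_2$ on $Z_1$ (whence $G^*O_{Z_2}(1,1)=T_2+P_0+(n+1)P_1$), the identity $O_{Z_1}(1,1)(-C)=P_0+P_1+T_2$, and the fact that the exceptional curve $C$ of the blow-up at $H(x)=0$ is exactly the divisor of zeros of the pullback of $H(x)$ along $P_0$. (This is where $H(x)$ as a whole, not its individual monomials, enters.)

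The last step is to conjugate by $j_k$. Writing \eqref{identification} as the roof $O_{Z_1}(1,1)\xleftarrow{i_3}O_{Z_1}(1,0)\xrightarrow{i_2}O_{Z_1}(1,1)(-C)\xrightarrow{i_1}G^*O_{Z_2}(1,1)$, one has $F_{nc}(\xi)=j_k^{-1}\circ\LDer F_i^*(\xi)\circ j_k$ for $\xi\in\{x,y\}$; chasing $x$ and $y$ through this composite of line-bundle maps, and keeping track of along which of the curves $P_0,P_1,T_1,T_2,C$ each map vanishes, determines not merely the rational function but the precise order of the factors, and yields $F_{nc}(y)=y^{-1}xy$ and $F_{nc}(x)=y^{-1}H(x)$; as a check, $\phi(y^{-1}xy)=x=F^*y$ and $\phi(y^{-1}H(x))=H(x)/y=F^*x$. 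I expect this last step to be the main obstacle. Because $\phi\colon A\to\mathbf{C}(x,y)$ is far from injective, knowing the rational functions says almost nothing; one must genuinely perform the roof calculus in the non-commutative category $\widetilde{C}$ and verify that the resulting element of $A$ is exactly the word $y^{-1}xy$ (respectively $y^{-1}H(x)$), with the factors in that order — and it is here that the non-commutativity of $A$, encoded in the geometry of the chain of toric curves on $Z_1$ and of the curve $C$, genuinely intervenes.
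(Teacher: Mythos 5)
Your outline follows the paper's strategy exactly: reduce to $Z_1,Z_2$, represent $x,y$ as roofs through $O(0,1)$ and $O(1,0)$, pull back the corresponding divisors along $G$ using the linear equivalences on the chain $P_2,P_1,P_0,T_0,T_1$, and then conjugate by the identification \eqref{identification}. But the proof has a genuine gap precisely where you say you expect the main obstacle to be: the final step is only described, not performed. Saying that one ``chases $x$ and $y$ through this composite of line-bundle maps, keeping track of where each map vanishes'' does not by itself produce a word in $x,y$; a priori the chase produces a zig-zag of inclusions between line bundles such as $G^*O_{Z_2}(1,0)$, $G^*O_{Z_2}(1,1)$, $O_{Z_1}(1,1)(-C)$ that are \emph{not} pulled back from $\mathbb{P}^1\times\mathbb{P}^1$, and there is no evident dictionary translating such a zig-zag into an element of $A=\Hom_{\widetilde C}(O(1,1),O(1,1))$.

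The missing idea is the device the paper uses to close this gap: one embeds $G^*O_{Z_2}(1,0)$ and $G^*O_{Z_2}(1,1)$ into the pulled-back bundles $O_{Z_1}(n,1)$ and $O_{Z_1}(n+1,1)$ by the \emph{common} divisor $C+nT_2$ (maps $i$ and $j$), after first checking via the exact sequences involving $O_C$ that these objects stay inside $\widetilde D(Z_1)$. Once everything is pushed into bundles of the form $O(a,1)$, each composite becomes the pullback of an explicit section on $\mathbb{P}^1\times\mathbb{P}^1$: $j\circ G^*X=Z^nH(X/Z)W$, $j\circ G^*Z=Z^nY$, $j\circ G^*Y=X\circ i$, $j\circ G^*W=Z\circ i$, and the identification morphism becomes $\alpha=Z^nYW^{-1}$. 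Only then, using the fixed identifications of $O(l,1)$ with $O(l+1,1)$ by $Z$ and of $O(1,l)$ with $O(1,l+1)$ by $W$, does the conjugation $\alpha^{-1}\circ(\,\cdot\,)\circ\alpha$ yield the ordered words $y^{-1}H(x)$ and $y^{-1}xy$. Without this reduction to monomials in $X,Y,Z,W$ your argument establishes only the image under $\phi$, which, as you yourself note, determines almost nothing about the element of $A$.
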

\begin{proof}
	We first explain, how we identify the algebra $A$ with the endomorphism ring $\Hom_{\widetilde{C}(Y_i)}(Q_i,Q_i)$. If $\pi:X\rightarrow Y$ is a blow-up of a surface $Y$ at the smooth point, then $\LDer \pi^*$ induces a fully faithful embedding, and we have a semiorthogonal decomposition:
	\[D(X)=<O_E,\LDer\pi^*D(Y)>,\]   
	where $O_E$ is a structure sheaf of the exceptional curve $E$ of the blow-up. The similar decomposition works for weighted blow-ups. Functor $\LDer\pi^*$ then induces equivalences between quotient categories $\widetilde{C}(X)$ and $\widetilde{C}(Y)$. In particular, for a surface $Y_i$ we use sequence of blow-ups $\pi_i:Y_i\rightarrow \mathbb{P}^1\times\mathbb{P}^1$ to identify $\widetilde{C}(Y_i)$ with $\widetilde{C}=\widetilde{C}(\mathbb{P}^1\times\mathbb{P}^1)$. 
	
	If $(X:Z)\times(Y:W)$ are homogeneous coordinates on $\mathbb{P}^1\times\mathbb{P}^1$, then diagram 
	\begin{align}\label{def_x}
		O(1,1)\xleftarrow{Z} O(0,1)\xrightarrow{X} O(1,1)
	\end{align}
		defines an element in $\Hom_{\widetilde{C}}(O(1,1),O(1,1))$, which we denote by $x$. Similarly the diagram 
	\begin{align*}\label{def_y}
			O(1,1)\xleftarrow{W} O(1,0)\xrightarrow{Y} & O(1,1)
	\end{align*}
	defines an element in $\Hom_{\widetilde{C}}(O(1,1),O(1,1))$, which we denote by $y$.
		
	In the article \cite{Us07} we computed, that $\Hom_{\widetilde{C}}(\mathbb{P}^2)(O(2))=A$. If $(X:Y:Z)$ are homogeneous coordinates on $\mathbb{P}^2$, then denote by $x,y$ the elements represented by diagrams $O_{\mathbb{P}^2}(2)\xleftarrow{Z} O_{\mathbb{P}^2}(1)\xrightarrow{X} O_{\mathbb{P}^2}(2)$ and $O_{\mathbb{P}^2}(2)\xleftarrow{Z} O_{\mathbb{P}^2}(1)\xrightarrow{Y} O_{\mathbb{P}^2}(2)$ respectively. Consider the toric surface $T$, given by the fan $(1,0),(0,-1),(-1,-1),(-1,0),(0,1)$. It admits toric projections to both $\mathbb{P}^2$ and $\mathbb{P}^1\times\mathbb{P}^1$. We can therefore pull-back both $D(\mathbb{P}^2)$ and $D(\mathbb{P}^1\times\mathbb{P}^1)$ to $D(T)$ and compare the diagrams there. We observe that on surface $T$ the divisors $O_T(1,0),O_T(0,1)$ embed into $O_T(1)$, and the divisor $O_T(1,1)$ embeds into $O_T(2)$. Moreover, the diagrams that define $x,y$ in $\widetilde{C}(\mathbb{P}^1\times\widetilde{P}^1)$ and $\widetilde{C}(\mathbb{P}^2)$ give the same morphisms in $\widetilde{C}(T)$, thus identifying $\Hom_{\widetilde{C}(\mathbb{P}^1\times\mathbb{P}^1)}(O(1,1),O(1,1))$ with $A$.

	We now compute the action of $F_{nc}$ on $A$. It is enough to compute the action on elements $x,y$. First we compute the preimages of line bundles on $Z_1$:
	\[G^*O_{Z_2}(1,0)=G^*(P_1+nP_2)=P_0+nP_1,\]
	\[G^*O_{Z_2}(0,1)=G^*(T_1+P_2)=P_1+T_2,\]
	\[G^*O_{Z_2}(1,1)=G^*(P_1+nP_2+T_1)=P_0+(n+1)P_1+T_2.\]

Next recall that to represent $x,y$ on $Z_2$ we need the following maps:
\[X,Z:O_{Z_2}(0,1)\rightarrow O_{Z_2}(1,1),\] 
\[Y,W:O_{Z_2}(1,0)\rightarrow O_{Z_2}(1,1).\]

The map $X$ defines an inclusion of line bundles $O_{Z_2}(0,1)\xrightarrow{T_0+E} O_{Z_2}(1,1)$, given by divisor $T_0+E$. Consequently we will write the equality, where both sides are understood as inclusions of line bundles:
	\[X=T_0+E.\]
	
In a similar way we compute:
\[Z=P_1+nP_2,\]
\[Y=P_0,\]
\[W=T_1+P_2.\]

	Therefore we can compute the pull-backs:
\[G^*X=T_1+G^*(E),\]
\[G^*Z=P_0+nP_1,\]
\[G^*Y=T_0,\]
\[G^*W=T_2+P_1.\]

 Let $\pi_1'$ be the projection of $Z_1$ to the toric surface $Z_1^0$, and $C$ is the exceptional curve of the blow-up. As before $\pi_1$ is the toric projection from $Z_1^0$ to $\mathbb{P}^1\times\mathbb{P}^1$. We can write, using the same notation for the toric divisor $P_0$ on $Z_1^0$ and for its strict transform to $Z_1$:
	\[P_0+C=\pi_1'^*P_0.\]
	
	Moreover for any line bundle $L$ on $Z_1^0$ we have
	\[(\pi_1'^*L)_{C}=O_{C}.\]  
	
	This implies that there are exact sequences of coherent sheaves on $Z_1$:
	\[0\rightarrow G^*O_{Z_2}(1,0)\rightarrow (\pi_1')^*(P_0+nP_1)\rightarrow O_{C}\rightarrow 0,\]
	\[0\rightarrow G^*O_{Z_2}(1,1)\rightarrow (\pi_1')^*(P_0+(n+1)P_1+T_2)\rightarrow O_{C}\rightarrow 0.\]
		
	Observe that $O_C$ is an object of $\widetilde{D}(Z_1)$. The terms on the left and on the right in both sequences belong to the category $\widetilde{D}(Z_1)$, therefore so do the terms in the middle. 
	
	On the surface $Z_1^0$ we have:
	\[O_{Z_1^0}(1,0)=P_1+T_2.\]
	
	Therefore we have inclusions of line bundles on $Z_1^0$, which are isomorphisms outside $T_2$:
	\[P_0+nP_1\xrightarrow{nT_2} P_0+nP_1+nT_2=O_{Z_1}(n,1)),\]
	\[P_0+(n+1)P_1+T_2\xrightarrow{nT_2} P_0+(n+1)P_1+(n+1)T_2=O_{Z_1}(n+1,1).\]
	
	We use objects $O(n,1),O(n+1,1)\in\widetilde{D}(\mathbb{P}^1\times\mathbb{P}^1)$, and their pullbacks $O_{Z_1}(n,1),O_{Z_1}(n+1,1)$ to $Z_1$. We have inclusions of line bundles on surface $Z_1$:
	\[i:G^*O_{Z_2}(1,0)\xrightarrow{C+nT_2} O(n,1),\]
	\[j:G^*O_{Z_2}(1,1)\xrightarrow{C+nT_2} O(n+1,1).\]
	
And therefore we have compositions
	\[j\circ \LDer G^*X,j\circ \LDer G^*Z:G^*O_{Z_2}(0,1)=O_{Z_1}(1,0)\rightarrow O_{Z_1}(n+1,1).\]

	Now recall that $G^*X=T_1+G^*E$, $j=C+nT_2$, therefore $j\circ F^*X=C+T_1+G^*E+nT_2$. But the morphism $j\circ G^*X$ is a lift of a morphism from $\mathbb{P}^1\times\mathbb{P}^1$, where it is given by 
	\[(\pi_1)\circ(\pi_1')(C+T_1+G^*E+nT_2)=T_1+\pi_1(\pi_1'(G^*E)).\]

The divisor $\pi_1(\pi_1'(G^*E))$ is given by $H(x)=0$ on $\mathbb{P}^1\times\mathbb{P}^1$. If we introduce homogeneous polynomial $H(X,Z)$ defined by the condition that $\frac{H(X,Z)}{Z^n}=H(\frac{X}{Z})$, then the inclusion of line bundles $j\circ G^*X:O_{Z_1}(1,0)\rightarrow O_{Z_1}(n+1,1)$ is a pullback of the map $H(X,Z)W$. We can write 
\[j\circ G^*X=Z^n H(\frac{X}{Z})W.\]

 By the similar argument $j\circ G^*Z=(\pi_1\circ\pi_1')^*(P_0+nP_1)$. It follows that $j\circ G^*Z=Z^nY$.

	Both inclusions $i$ and $j$ are given by the same divisor $C+nT_2$, so 
	\[G^*Y,G^*W\in \Hom(G^*O_{Z_2}(1,0),G^*O_{Z_2}(1,1))=\Hom(O(n,1),O(n+1,1)).\]
	
Inclusion $G^*Y$ is given by $T_0$, so $j\circ G^*Y=X\circ i$. Inclusion $G^*W$ is given by $T_2+P_1$, so $j\circ G^*W=Z\circ i$.

	We also need to know the map $j\circ i_1\circ i_2:O(1,0)\rightarrow O(n+1,1)$, where $i_1,i_2$ are used in \eqref{identification} to identify $O_{Z_1}(1,0)$ and $G^*O_{Z_2}(1,1)$ in $\widetilde{C}(Z_1)$.
	
	 In our notations $i_3=W$. By using the similar techniques, we see that on the surface $\mathbb{P}^1\times\mathbb{P}^1$ we have $j\circ i_1\circ i_2=(\pi_1\circ\pi_1')^*(P_0+nP_1)$. It implies that $j\circ i_1\circ i_2=Z^nY$. 
	
	We use map $Z$ to identify $O_{Z_1}(l,1)$ and $O_{Z_1}(l+1,1)$ in $\widetilde{C}(Z_1)$, and the map $W$ to indentify $O_{Z_1}(1,l)$ and $O_{Z_1}(1,l+1)$ in $\widetilde{C}(Z_1)$. 
	
	 Let us denote by 
	\[\alpha\in\Hom_{\widetilde{C}(Z_1)}(O_{Z_1}(1,1),O_{Z_1}(n+1,1))\]
	the following morphism 
	\[\alpha=j\circ i_1\circ i_2\circ i_3^{-1}.\]
	
	Then we can write the element $F_{nc}(x)$ in the category $\widetilde{C}(Z_1)$ as:
\begin{align*}
F_{nc}(x)=\alpha^{-1}\circ j\circ F^*X\circ (F^*Z)^{-1}\circ j^{-1}\circ\alpha=\\
	=(Z^nY)^{-1}Z^n H(\frac{X}{Z})W(Z^nY)^{-1}(Z^nY)=y^{-1}H(x)y^{-1}y=y^{-1}H(x).
\end{align*}

	Similarly 
	\[F_{nc}(y)=(\alpha)^{-1}\circ j\circ F^*Y\circ (F^*W)^{-1}\circ j^{-1}\circ\alpha=(Z^nY)^{-1}XZ^{-1}(Z^nY)=y^{-1}xy.\]
	
	The claim of the lemma follows from the observation, that the pull-back along the map $Y_i\rightarrow Z_1$ induces equivalence of categories $\widetilde{C}(Z_1)$ and $\widetilde{C}(Y_i)$. In particular we note, that the formula for $F_{nc}$ doesn't depend on $i$.
\end{proof}

	We can now proceed to the final argument.
\begin{theorem}
		$F_{nc}^k(x),F_{nc}^k(y)$ are non-commutative Laurent polynomials.   
\end{theorem}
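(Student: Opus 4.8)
The plan is to prove the slightly finer assertion that $F_{nc}^k(x)$ and $F_{nc}^k(y)$ lie in the image of the natural map
\[
\Hom_{\widetilde{D}(Y_0)/\widetilde{D}_B(Y_0)}(Q_0,Q_0)\longrightarrow \Hom_{\widetilde{C}(Y_0)}(Q_0,Q_0)=A ,
\]
and then to quote Lemma \ref{nc_Laurent_quotient}, which identifies this image with the subalgebra $\mathbf{C}<x,x^{-1},y,y^{-1}>$ of non-commutative Laurent polynomials. Since $\widetilde{D}_{D_0}(Y_0)\subseteq\widetilde{D}_B(Y_0)\subseteq\widetilde{D}^1(Y_0)$, with the resulting quotient functors $\widetilde{D}(Y_0)/\widetilde{D}_{D_0}(Y_0)\to\widetilde{D}(Y_0)/\widetilde{D}_B(Y_0)\to\widetilde{C}(Y_0)$, it is in fact enough to exhibit $F_{nc}^k(x)$ and $F_{nc}^k(y)$ as images of morphisms $Q_0\to Q_0$ in the finest of these, $\widetilde{D}(Y_0)/\widetilde{D}_{D_0}(Y_0)$.

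To do this I would run through the construction of $F_{nc}^k$ one factor at a time along $\LDer\Phi^*=\LDer F_0^*\circ\dots\circ\LDer F_{k-1}^*$, checking at each stage that everything already makes sense in the quotients $\widetilde{D}(Y_i)/\widetilde{D}_{D_i}(Y_i)$. Three points need to be verified. First, the elements $x,y\in A=\Hom_{\widetilde{C}(Y_k)}(Q_k,Q_k)$ lift to $\widetilde{D}(Y_k)/\widetilde{D}_{D_k}(Y_k)$: in the defining diagram \eqref{def_x} the element $x$ is $[X]\circ[Z]^{-1}$, and only the ``denominator'' $Z:O(0,1)\to O(1,1)$ must be inverted; its cone pulled back to $Y_k$ is (up to twist) the structure sheaf of the strict transform of a toric divisor of $\mathbb{P}^1\times\mathbb{P}^1$ which carries none of the $2n$ blown-up points, hence lies in $\widetilde{D}_{D_k}(Y_k)$, and likewise for $y$ with the map $W$. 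Second, each $\LDer F_i^*$ comes from a regular isomorphism with $F_i(D_i)=D_{i+1}$, hence carries $\widetilde{D}_{D_{i+1}}(Y_{i+1})$ onto $\widetilde{D}_{D_i}(Y_i)$ (as recorded above) and therefore descends to an equivalence $\widetilde{D}(Y_{i+1})/\widetilde{D}_{D_{i+1}}(Y_{i+1})\xrightarrow{\sim}\widetilde{D}(Y_i)/\widetilde{D}_{D_i}(Y_i)$. Third, the identification $j_k:Q_i\to\LDer F_i^*Q_{i+1}$ used to define $F_{nc}$ is read off from the zig-zag \eqref{identification}, and the cones of its maps $i_1,i_2,i_3$ were checked (right after \eqref{identification}) to be supported on $P_1$, on $P_0$ and on $T_1\cup T_2$ respectively, all components of the chain $D_i$, so $j_k$ is already an isomorphism in $\widetilde{D}(Y_i)/\widetilde{D}_{D_i}(Y_i)$. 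Composing the $k$ equivalences with the $k$ identifications realizes the assignment $x,y\mapsto F_{nc}^k(x),F_{nc}^k(y)$ at the level of the Hom-spaces of $\widetilde{D}(Y_i)/\widetilde{D}_{D_i}(Y_i)$; thus $F_{nc}^k(x)$ and $F_{nc}^k(y)$ are indeed images of morphisms $Q_0\to Q_0$ in $\widetilde{D}(Y_0)/\widetilde{D}_{D_0}(Y_0)$, and a fortiori in the image of $\Hom_{\widetilde{D}(Y_0)/\widetilde{D}_B(Y_0)}(Q_0,Q_0)\to A$.

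Lemma \ref{nc_Laurent_quotient} then gives $F_{nc}^k(x),F_{nc}^k(y)\in\mathbf{C}<x,x^{-1},y,y^{-1}>$, which is the claim; for negative $k$ one runs the identical argument with $F^{-1}$ (a birational map of the same shape), and for $H$ with repeated roots the assertion follows by specialization, the coefficients of a non-commutative Laurent expansion of bounded degree depending polynomially on the $h_i$. The real obstacle, however, is not the bookkeeping above but the input Lemma \ref{nc_Laurent_quotient} itself. One inclusion there is easy: the ``numerator'' maps $X$ and $Y$ in \eqref{def_x} also have cones supported on preimages of toric divisors, hence in $\widetilde{D}_B(Y_0)$, so $x$ and $y$ become \emph{invertible} in $\widetilde{D}(Y_0)/\widetilde{D}_B(Y_0)$ and every non-commutative Laurent polynomial is hit. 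The opposite inclusion --- that nothing beyond $\mathbf{C}<x,x^{-1},y,y^{-1}>$ is hit --- requires genuinely computing the Hom-spaces of $\widetilde{D}(Y_0)/\widetilde{D}_B(Y_0)$, for which I would use that $Y_0\setminus B$ is the algebraic torus $\mathbb{G}_m^2=\operatorname{Spec}\mathbf{C}[x,x^{-1},y,y^{-1}]$ together with the description of the category $\widetilde{C}$ of a toric surface from \cite{Us07}. A final subtlety worth stressing is that $F_{nc}^k(x),F_{nc}^k(y)$, which a priori are defined only inside $\widetilde{C}(Y_0)$ where a great many maps of sheaves have already been inverted, genuinely \emph{lift} to the coarser quotient rather than merely being images of something there; this is exactly why it matters that every structure map appearing in \eqref{def_x} and in \eqref{identification} is an honest map of line bundles with cone supported on the boundary.
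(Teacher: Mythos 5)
Your proposal is correct and follows essentially the same route as the paper: you reduce to showing that $F_{nc}^k(x),F_{nc}^k(y)$ lift to $\Hom_{\widetilde{D}(Y_0)/\widetilde{D}_{D_0}(Y_0)}(Q_0,Q_0)$ by tracking the denominators $Z,W$, the equivalences $\LDer F_i^*$, and the identifications built from $i_1,i_2,i_3$ through the coarser quotients, which is precisely the content of Lemma \ref{intheimage}, and then invoke Lemma \ref{nc_Laurent_quotient}. The only differences are cosmetic (your description of $Cone(Z)$ as a single strict transform rather than the union $P_1\cup\dots\cup P_{k+1}\subset D_k$ is slightly loose but harmless), plus your welcome extra remarks on negative $k$ and repeated roots.
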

\begin{proof}
	First note that elements $F_{nc}^k(x),F_{nc}^k\in A$ are represented by elements of $\Hom_{\widetilde{C}(Y_0)}(Q_0,Q_0)$. Let $D_i$ be the chain of strict transforms of toric divisors from $Y_i^0$ to $Y_i$. We have natural functor
	\[\mathbb{K}_i:\widetilde{D}(Y_i)/\widetilde{D}_{D_i}(Y_i)\rightarrow\widetilde{D}(Y_i)/\widetilde{D}^1(Y_i)=\widetilde{C}(Y_i).\] 
	
In particular, we have the induced map
\[\mathbb{K}_i:\Hom_{\widetilde{D}(Y_i)/\widetilde{D}_{D_i}(Y_i)}(Q_i,Q_i)\rightarrow \Hom_{\widetilde{C}(Y_i)}(Q_i,Q_i).\]

\begin{lemma}	\label{intheimage} 
	 Elements $F_{nc}^k(x)$, $F_{nc}^k(y)$ belong to the image of $\mathbb{K}_0$. 
\end{lemma}
\begin{proof}

	By definition \eqref{nc_definition} of $F_{nc}$ we have:
	\[F_{nc}^k=j_0^*\circ \LDer F_0^*\circ\dots \circ j_{k-1}^*\circ \LDer F_{k-1}^*.\]
	
	If $\Phi=F_{k-1}\circ\dots \circ F_0:Y_0\rightarrow Y_k$, and $\sigma:\Phi^*O_{Y_k}(1,1)\xrightarrow{\sim} O_{Y_0}(1,1)$ is an appropriate identification in the category $\widetilde{C}(Y_0)$, then $F_{nc}^k$ is the composition
	\[\Hom_{\widetilde{C}(Y_k)}(Q_k,Q_k)\xrightarrow{\LDer \Phi^*} \Hom_{\widetilde{C}(Y_0)}(\Phi^*Q_k,\Phi^*Q_k) \xrightarrow{\sigma} \Hom_{\widetilde{C}(Y_0)}(Q_0,Q_0).\]
	
	Observe, that $x=X\circ Z^{-1}$ as defined in \eqref{def_x} is well-defined morphism in $\widetilde{D}(Y_k)/<Cone(Z)>$, because it uses the inverse of morphism $Z$. But $Supp(Cone(Z))=P_1\cup P_2\cup\dots\cup P_{k+1}\subset D_k$, so in particular it is an element of  $\widetilde{D}(Y_k)/\widetilde{D}_{D_k}(Y_k)$. Similarly $y=Y\circ W^{-1}$ is well-defined morphism of $\widetilde{D}(Y_k)/\widetilde{D}_{D_k}(Y_k)$, because $Supp(Cone(W))=T_1\cup P_2\cup\dots \cup P_{k+1}\subset D_k$.
	
 Lemma \eqref{regular_auto} implies, that $\Phi^{-1}(D_k)=D_0$. So $\LDer \Phi^*(x)$, $\LDer\Phi^*(y)$ are well-defined in the category $\widetilde{D}(Y_0)/\widetilde{D}_{D_0}(Y_0)$. 
 
 Morphism $\sigma$ is a composition of morphisms of the kind $\LDer (F_{i-1}\dots F_0)^*\circ j_i$. Recall that $j_i$ is defined in \eqref{nc_definition} using identification $i_3\circ i_2^{-1}\circ i_1^{-1}$ of $F_i^*Q_{i+1}$ and $Q_i$ as in \eqref{identification}. Observe, that $i_1,i_2,i_3$ are invertible isomorphisms in $\widetilde{D}(Y_i)/\widetilde{D}_{D_i}(Y_i)$, therefore $\sigma$ is invertible in $\widetilde{D}(Y_0)/\widetilde{D}_{D_0}(Y_0)$. This proves the lemma. 
	
\end{proof}

	 Let us take a curve $B=\pi_0^{-1}(XYZW=0)\subset Y_0$, which is the preimage of all toric divisors on $\mathbb{P}^1\times\mathbb{P}^1$. It is the union of strict transform of toric divisors $D_0$ and $2n$ exceptional curves of blow-up of $Y_0^0$. Then we have:
	
\begin{lemma}\label{nc_Laurent_quotient}
	In the quotient category $C=\widetilde{D}(Y_0)/\widetilde{D}_{B}(Y_0)$ we have:
	\[\Hom_{C}(O(1,1),O(1,1))=\mathbf{C}<x,x^{-1},y,y^{-1}>.\]
\end{lemma}
\begin{proof}	 
	By construction $\pi_0$ is a composition of regular maps: $Y_0\rightarrow Y_0^0\rightarrow \mathbb{P}^1\times\mathbb{P}^1$, where first arrow is a blow-up at $2n$ distinct smooth points, and $Y_0^0$ is a toric surface. 
	For the blow-up $\pi:Y_0\rightarrow Y_0^0$ with exceptional divisor $E$ we have a semiorthogonal decomposition\cite{BonOr01}, \cite{BonKaprep}:
	\[\widetilde{D}(Y_0)=<\LDer\pi'^*(\widetilde{D}(Y_0^0)),O_{E}>.\]
	
	So we have an equivalence of categories 
	\[\widetilde{D}(Y_0)/\widetilde{D}_{B}(Y_0)\rightarrow\widetilde{D}(Y_0^0)/\widetilde{D}(Y_0^0)_{tor}.\]
	
	In the last formula $\widetilde{D}(Y_0^0)_{tor}$ is the full subcategory of objects supported on toric divisors. Because of the semiorthogonal decomposition of the blow-up, this quotient category is the same for any toric surface. Even though $Y_0^0$ is not smooth, we can consider a smooth toric surface $T$ with an toric morphism $f:T\rightarrow Y_0^0$, and we can speak about the quotient $\widetilde{D}(T)/\widetilde{D}(T)_{tor}$ instead. We didn't do it in order to avoid cumbersome formulas. 
	
	As a consequence we have:
\begin{align*}		C=\widetilde{D}(Y_0)/\widetilde{D}_B(Y_0)=\widetilde{D}(\mathbb{P}^1\times\mathbb{P}^1)/\widetilde{D}_{(XYZW=0)}(\mathbb{P}^1\times\mathbb{P}^1)=\\
	=<O(0,1),O(1,0),O(1,1)>/<Cone(X),Cone(Y),Cone(Z),Cone(W)>=\\
	=D\left(\mathbf{C}<x,y>-mod\right)/<Cone(x),Cone(y)>.
\end{align*}
	
	In the last category we have: 
	\[\Hom(O(1,1),O(1,1))=\mathbf{C}<x,x^{-1},y,y^{-1}>.\]
\end{proof}

	We have the following maps
	\[\Hom_{\widetilde{D}(Y_0)/\widetilde{D}_{D_0}(Y_0)}(Q_0,Q_0)\rightarrow\Hom_{\widetilde{D}(Y_0)/\widetilde{D}_B(Y_0)}(Q_0,Q_0)\rightarrow\Hom_{\widetilde{C}(Y_0)}(Q_0,Q_0)=A.\]
	
	Lemma \ref{intheimage} implies that $F_{nc}^k(x),F_{nc}^k(y)$ belong to the image of the composition of these maps. In particular, they belong to the image of the second map, which is the subalgebra $\mathbf{C}<x,x^{-1},y,y^{-1}>\subset A$ by Lemma \ref{nc_Laurent_quotient}. This proves the theorem.
\end{proof}

\bibliographystyle{plain}
\bibliography{references}

\end{document}